\renewcommand{\le}{\leqslant}
\renewcommand{\ge}{\geqslant}
\newcommand{\seq}[1]{\bm{#1}}
\newtheorem{theorem}{Theorem}[section]
\newaliascnt{lemma}{theorem}
\newtheorem{lemma}[lemma]{Lemma}
\newaliascnt{proposition}{theorem}
\newtheorem{proposition}[proposition]{Proposition}
\newaliascnt{corollary}{theorem}
\newtheorem{corollary}[corollary]{Corollary}
\theoremstyle{definition}
\newaliascnt{definition}{theorem}
\newtheorem{definition}[definition]{Definition}
\theoremstyle{remark}
\newaliascnt{remark}{theorem}
\newtheorem{remark}[remark]{Remark}
\crefname{theorem}{Theorem}{Theorems}
\crefname{lemma}{Lemma}{Lemmas}
\crefname{proposition}{Proposition}{Propositions}
\crefname{corollary}{Corollary}{Corollaries}
\crefname{definition}{Definition}{Definitions}
\crefname{remark}{Remark}{Remarks}
\newif\ifrgversion
\renewcommand{\@biblabel}[1]{\textcolor{blue}{[#1]}}
\title{A semigroup approach to iterated binomial transforms}
\author{Johann Verwee\thanks{Independent researcher. Email: \texttt{mverwee@gmail.com}.}}
\date{\today}
\begin{document}
\maketitle

\begin{abstract}
We introduce a one-parameter family of ``binomial-convolution'' operators \(\left(T_r\right)_{r\in\mathbb{C}}\) acting on sequences \(\seq{a}=(a_n)_{n\geqslant 0}\) by
\[
\left(T_r\seq{a}\right)_n := \sum_{k=0}^{n} \binom{n}{k} r^{\,n-k} a_k.
\]
These operators form an additive semigroup \(T_r\circ T_s = T_{r+s}\) with inverse \(\left(T_r\right)^{-1}=T_{-r}\), and the \(m\)-fold classical binomial transform is recovered as \(T_m\) for any nonnegative integer \(m\).
We describe \(T_r\) cleanly in terms of ordinary and exponential generating functions.
Our main structural result is a root-shift principle for constant-coefficient linear recurrences: if \(\seq{a}\) satisfies a recurrence with characteristic polynomial \(P\), then \(T_r\seq{a}\) satisfies the recurrence with characteristic polynomial \(X\mapsto P(X-r)\).
Several classical families (Fibonacci, Lucas, Pell, Jacobsthal, Mersenne) are treated uniformly as illustrative examples.
\end{abstract}

\smallskip
\noindent\textbf{2020 Mathematics Subject Classification.} Primary 05A19; Secondary 11B37.

\smallskip
\noindent\textbf{Key words and phrases.} Binomial transform; Riordan arrays; generating functions; linear recurrences; Sheffer sequences.

\section{Introduction}

Binomial transforms appear ubiquitously across enumerative combinatorics, the theory of linear recurrences, and the manipulation of generating functions.
A common pattern in the literature is to start from a specific sequence (or polynomial sequence), apply the classical binomial transform
\[
(B\seq{a})_n := \sum_{k=0}^{n} \binom{n}{k} a_k,
\]
and then iterate the procedure, producing \(B^m\seq{a}\) for integers \(m\ge 0\).
One then derives ad hoc recurrences, Binet-type formulas, and generating functions for the iterates; see for instance the recent work of Yılmaz \cite{Yilmaz2025} in the context of generalized Mersenne polynomials.

The goal of this note is to record a simple structural perspective that turns these computations into direct consequences of a semigroup identity.
Instead of iterating \(B\), we introduce a parameter \(r\) and work with the family \(T_r\) defined by binomial convolution with the weight \(r^{n-k}\).
This one-parameter family (and its natural two-parameter extension) appears in several places in the literature under the name of a generalized binomial interpolated operator; see \cite{BarberoCerrutiMurru2010,AbrateBarberoCerrutiMurru2011}.
The contribution of the present note is mainly expository: we isolate the minimal one-parameter semigroup \(T_r\), and we give a short semigroup-based argument showing that \(T_r\) translates characteristic roots by \(r\) (equivalently, a characteristic polynomial \(P\) is sent to \(X\mapsto P(X-r)\)). We also include a compact table of classical corollaries for quick reuse.
Our aim is not to revisit the full two-parameter theory, but to emphasize three features that are particularly efficient in practice: a transparent composition law, an explicit inverse, and a uniform interpretation in the Riordan group.
Most importantly for applications, \(T_r\) acts on constant-coefficient recurrences by a literal translation of characteristic roots.

\medskip

Throughout, sequences are indexed by \(n\ge 0\) and take values in a commutative \(\mathbb{C}\)-algebra (for example \(\mathbb{C}\), \(\mathbb{C}[x]\), or \(\mathbb{R}\)).
We write \(\binom{n}{k}=0\) if \(k\notin\{0,\dots,n\}\).

\section{The binomial-convolution semigroup}

We start by defining the operators that will encode all iterates at once.

\begin{definition}
For \(r\in\mathbb{C}\) and a sequence \(\seq{a}=(a_n)_{n\ge 0}\), define \(\seq{b}:=T_r\seq{a}\) by
\begin{equation}\label{eq:Tr-def}
(T_r \seq{a})_n := \sum_{k=0}^{n} \binom{n}{k} r^{\,n-k} a_k \qquad (n\ge 0).
\end{equation}
\end{definition}

The definition resembles the usual binomial transform, but with a ``memory'' parameter \(r\) that interpolates between different transforms.
The next proposition explains why this is the correct parametrization.

\begin{proposition}\label{prop:semigroup}
For all \(r,s\in\mathbb{C}\) and all sequences \(\seq{a}\), one has
\[
T_r\circ T_s = T_{r+s}.
\]
In particular \(T_0\) is the identity and \(T_r\) is invertible with inverse \(\left(T_r\right)^{-1}=T_{-r}\).
\end{proposition}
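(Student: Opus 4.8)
The plan is to verify the semigroup law $T_r \circ T_s = T_{r+s}$ by a direct computation on the defining sum, and I expect the main work to be a change of summation order together with the Vandermonde-type identity for binomial coefficients. I will compute $(T_r(T_s\seq{a}))_n$ explicitly, substitute the definition of $T_s\seq{a}$, interchange the two finite sums, and collect the coefficient of each $a_j$; the goal is to show that coefficient equals $\binom{n}{j}(r+s)^{n-j}$, which is exactly the coefficient appearing in $(T_{r+s}\seq{a})_n$.

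\begin{proof}
Fix $r,s\in\mathbb{C}$, a sequence $\seq{a}$, and an index $n\ge 0$. Applying \cref{eq:Tr-def} twice,
\[
\bigl(T_r(T_s\seq{a})\bigr)_n
= \sum_{k=0}^{n} \binom{n}{k} r^{\,n-k} (T_s\seq{a})_k
= \sum_{k=0}^{n} \binom{n}{k} r^{\,n-k} \sum_{j=0}^{k} \binom{k}{j} s^{\,k-j} a_j.
\]
Since all sums are finite, I would interchange the order of summation over $j$ and $k$, writing the double sum as $\sum_{j=0}^{n} a_j \sum_{k=j}^{n} \binom{n}{k}\binom{k}{j} r^{\,n-k} s^{\,k-j}$. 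The next step is to simplify the inner sum: using the subset-of-a-subset identity $\binom{n}{k}\binom{k}{j} = \binom{n}{j}\binom{n-j}{k-j}$ to factor out $\binom{n}{j}$, and then reindexing the inner summation by $\ell = k-j$ (so $\ell$ runs from $0$ to $n-j$), the inner sum becomes $\binom{n}{j}\sum_{\ell=0}^{n-j} \binom{n-j}{\ell} r^{\,(n-j)-\ell} s^{\,\ell}$.

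By the binomial theorem this last sum is exactly $(r+s)^{\,n-j}$, so the coefficient of $a_j$ collapses to $\binom{n}{j}(r+s)^{\,n-j}$, giving
\[
\bigl(T_r(T_s\seq{a})\bigr)_n = \sum_{j=0}^{n} \binom{n}{j} (r+s)^{\,n-j} a_j = (T_{r+s}\seq{a})_n.
\]
As $n$ and $\seq{a}$ were arbitrary, this proves $T_r\circ T_s = T_{r+s}$. The two consequences are then immediate: setting $r=s=0$ (or noting that $T_0$ acts by $\sum_{k}\binom{n}{k}0^{\,n-k}a_k = a_n$, with the convention $0^0=1$) shows $T_0$ is the identity, and taking $s=-r$ gives $T_r\circ T_{-r} = T_0 = \mathrm{id}$, and likewise $T_{-r}\circ T_r = \mathrm{id}$, so $T_r$ is invertible with $(T_r)^{-1}=T_{-r}$.
\end{proof}

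The only genuine obstacle is bookkeeping: one must be careful that the interchange of summation is legitimate (here it is trivial, as both sums are finite) and that the reindexing $\ell=k-j$ keeps the binomial-theorem identity exact. I do not anticipate any analytic subtlety, since everything is a finite algebraic manipulation over the commutative $\mathbb{C}$-algebra in which $\seq{a}$ takes values.
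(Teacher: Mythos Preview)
Your proof is correct and follows essentially the same route as the paper's: expand the double sum, apply the subset-of-a-subset identity $\binom{n}{k}\binom{k}{j}=\binom{n}{j}\binom{n-j}{k-j}$, and recognize the inner sum as $(r+s)^{n-j}$ via the binomial theorem. The only cosmetic difference is that you make the reindexing $\ell=k-j$ and the identity/inverse deductions slightly more explicit.
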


\begin{proof}
Fix \(n\ge 0\) and expand:
\[
(T_r(T_s\seq{a}))_n
= \sum_{k=0}^{n} \binom{n}{k} r^{\,n-k} (T_s\seq{a})_k
= \sum_{k=0}^{n} \binom{n}{k} r^{\,n-k} \sum_{j=0}^{k} \binom{k}{j} s^{\,k-j} a_j.
\]
Rearranging the finite sums and using \(\binom{n}{k}\binom{k}{j}=\binom{n}{j}\binom{n-j}{k-j}\), we get
\[
(T_r(T_s\seq{a}))_n
= \sum_{j=0}^{n} \binom{n}{j} a_j \sum_{k=j}^{n} \binom{n-j}{k-j} r^{\,n-k} s^{\,k-j}.
\]
The inner sum is \(\left(r+s\right)^{n-j}\) by the binomial theorem, hence
\[
(T_r(T_s\seq{a}))_n
= \sum_{j=0}^{n} \binom{n}{j} \left(r+s\right)^{n-j} a_j
= (T_{r+s}\seq{a})_n.
\]
The inversion statement follows by taking \(s=-r\).
\end{proof}

The following observation connects back to the classical setting of iterated binomial transforms.

\begin{corollary}\label{cor:iterated}
Let \(B:=T_1\) be the classical binomial transform.
For every integer \(m\ge 0\) and every sequence \(\seq{a}\), one has
\[
B^{\,m}\seq{a} = T_m\seq{a}.
\]
Equivalently, the \(m\)-fold iterate satisfies the closed form
\[
(B^{\,m}\seq{a})_n = \sum_{k=0}^{n} \binom{n}{k} m^{\,n-k} a_k.
\]
\end{corollary}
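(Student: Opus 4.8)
The plan is to derive this directly from the semigroup law in \Cref{prop:semigroup}. The key observation is that $B = T_1$ by the very definition of the classical binomial transform: setting $r=1$ in \cref{eq:Tr-def} gives $(T_1\seq{a})_n = \sum_{k=0}^n \binom{n}{k} a_k = (B\seq{a})_n$. With this identification, the whole statement becomes a clean instance of the additive structure, so essentially no computation is needed beyond invoking the proposition.

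First I would treat the identity $B^m = T_m$ for integers $m\ge 0$ by induction on $m$. The base case $m=0$ reads $B^0 = \mathrm{id} = T_0$, which holds since $B^0$ is the identity operator and $T_0$ is the identity by \Cref{prop:semigroup}. For the inductive step, assuming $B^m = T_m$, I would write $B^{m+1} = B \circ B^m = T_1 \circ T_m = T_{1+m} = T_{m+1}$, where the middle equality uses the inductive hypothesis together with $B = T_1$, and the crucial step $T_1 \circ T_m = T_{1+m}$ is exactly the semigroup law $T_r \circ T_s = T_{r+s}$ with $r=1$, $s=m$.

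Once $B^m = T_m$ is established as an operator identity, the ``equivalently'' clause is immediate: I simply evaluate both sides on a sequence $\seq{a}$ at index $n$ and substitute $r = m$ into the defining formula \cref{eq:Tr-def}, obtaining
\[
(B^m\seq{a})_n = (T_m\seq{a})_n = \sum_{k=0}^{n} \binom{n}{k} m^{\,n-k} a_k,
\]
which is the desired closed form.

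I do not anticipate any genuine obstacle here; the result is a corollary precisely because the semigroup identity does all the work. The only point requiring a moment's care is the bookkeeping at the base of the induction—making sure that $B^0$ and $T_0$ are both correctly identified as the identity operator rather than glossing over the $m=0$ case—but this is routine. The entire content is that iterating $B$ corresponds to adding the parameter $r$, which the proposition already encapsulates.
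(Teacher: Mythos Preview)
Your proof is correct and follows the same approach as the paper, which simply invokes \cref{prop:semigroup} to conclude \(B^{\,m}=T_1^{\,m}=T_m\). Your version is slightly more explicit in spelling out the induction and the base case \(T_0=\mathrm{id}\), but the substance is identical.
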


\begin{proof}
By \cref{prop:semigroup}, \(B^{\,m} = T_1^{\,m} = T_m\) for integers \(m\ge 0\).
\end{proof}

In practice, \cref{cor:iterated} is the main computational shortcut: it converts any question about \(m\)-fold iteration into a single explicit sum.
In the next section we make this even more efficient by translating \(T_r\) into generating-function language.

\section{Generating functions and Riordan arrays}

Generating functions provide a conceptual way to read \cref{eq:Tr-def} as a substitution rule.
Given a sequence \(\seq{a}\), we write its ordinary and exponential generating functions as
\[
A(z):=\sum_{n\ge 0} a_n z^n,
\qquad
\widehat{A}(t):=\sum_{n\ge 0} a_n \frac{t^n}{n!}.
\]
If \(\seq{b}:=T_r\seq{a}\), we denote by \(A_r(z)\) and \(\widehat{A}_r(t)\) the corresponding generating functions.

\begin{proposition}\label{prop:gfs}
As formal power series one has
\begin{align}
A_r(z) &= \frac{1}{1-rz}\,A\left(\frac{z}{1-rz}\right), \label{eq:ogf}\\
\widehat{A}_r(t) &= e^{rt}\,\widehat{A}(t). \label{eq:egf}
\end{align}
In particular, recall that the Riordan array \((g,f)\) is the lower-triangular matrix with entries \(d_{n,k}=[z^n]\,g(z)f(z)^k\). In this sense, \(T_r\) corresponds to \(\left(\left(1-rz\right)^{-1},\, z\left(1-rz\right)^{-1}\right)\).
\end{proposition}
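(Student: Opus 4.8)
The plan is to prove the two generating-function identities independently, since each follows from a clean one-line manipulation, and then to read off the Riordan-array description as an immediate consequence of the ordinary generating function (OGF) formula.

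The exponential generating function (EGF) identity \cref{eq:egf} is the easier of the two and I would dispatch it first. The point is that \cref{eq:Tr-def} is, on the EGF side, literally a product of exponential generating functions: the sequence $(r^n)_{n\ge 0}$ has EGF $\sum_{n\ge 0} r^n t^n/n! = e^{rt}$, and the binomial convolution $b_n=\sum_{k=0}^n \binom{n}{k} r^{n-k} a_k$ is precisely the coefficient rule for multiplying EGFs. Thus $\widehat{A}_r(t)=e^{rt}\widehat{A}(t)$ follows from the standard fact that $\big(\sum c_k t^k/k!\big)\big(\sum d_j t^j/j!\big)=\sum_n\big(\sum_{k}\binom{n}{k}c_k d_{n-k}\big)t^n/n!$, applied with $c_k=a_k$ and $d_j=r^j$.

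For the OGF identity \cref{eq:ogf} I would substitute the definition and interchange the order of summation. Writing $A_r(z)=\sum_{n\ge 0}\big(\sum_{k=0}^n \binom{n}{k} r^{n-k} a_k\big) z^n$ and summing over $n\ge k$ first, I would set $m:=n-k$ so that the inner factor becomes $\sum_{m\ge 0}\binom{m+k}{k}(rz)^m$. The key identity is the negative-binomial expansion $\sum_{m\ge 0}\binom{m+k}{k}x^m=(1-x)^{-(k+1)}$, which turns the whole expression into $\sum_{k\ge 0} a_k z^k (1-rz)^{-(k+1)}=\frac{1}{1-rz}\sum_{k\ge 0} a_k\big(\frac{z}{1-rz}\big)^k$, and the inner sum is exactly $A\big(\frac{z}{1-rz}\big)$. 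This gives \cref{eq:ogf}. I expect the main (though still minor) obstacle here is bookkeeping: justifying the interchange of summation and the $m=n-k$ reindexing at the level of formal power series, which is routine because for each fixed power of $z$ only finitely many terms contribute.

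Finally, the Riordan-array statement is a direct translation of \cref{eq:ogf}. By definition a Riordan array $(g,f)$ acts on an OGF $A(z)$ by $A(z)\mapsto g(z)\,A(f(z))$; comparing with \cref{eq:ogf} we read off $g(z)=(1-rz)^{-1}$ and $f(z)=z(1-rz)^{-1}$, so that the $(n,k)$ entry is $d_{n,k}=[z^n]\,(1-rz)^{-1}\big(z(1-rz)^{-1}\big)^k$, matching the claimed pair $\big((1-rz)^{-1},\,z(1-rz)^{-1}\big)$. No further computation is needed once \cref{eq:ogf} is established.
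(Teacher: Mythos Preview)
Your proposal is correct and follows essentially the same route as the paper: the OGF argument is identical (swap sums, reindex $m=n-k$, apply the negative-binomial expansion), and the Riordan-array remark is the same one-line translation of \cref{eq:ogf}. The only cosmetic difference is that you handle the EGF by invoking the standard ``binomial convolution $=$ EGF product'' fact directly, whereas the paper redoes that computation explicitly; both are the same idea.
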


\begin{proof}
We start from \cref{eq:Tr-def} and sum over \(n\ge 0\):
\[
A_r(z)=\sum_{n\ge 0}\sum_{k=0}^{n}\binom{n}{k}r^{\,n-k}a_k z^n
=\sum_{k\ge 0} a_k z^k \sum_{n\ge k}\binom{n}{k} (rz)^{n-k}.
\]
Using \(\sum_{m\ge 0}\binom{m+k}{k} x^m = \left(1-x\right)^{-(k+1)}\) with \(m=n-k\) and \(x=rz\), we obtain
\[
A_r(z)=\sum_{k\ge 0} a_k z^k \left(1-rz\right)^{-(k+1)}
=\frac{1}{1-rz}\sum_{k\ge 0} a_k \left(\frac{z}{1-rz}\right)^{k},
\]
which is \cref{eq:ogf}.

For the exponential generating function, we again use \cref{eq:Tr-def} and exchange sums:
\[
\widehat{A}_r(t)
= \sum_{n\ge 0} \left(\sum_{k=0}^{n}\binom{n}{k} r^{\,n-k} a_k\right)\frac{t^n}{n!}
= \sum_{k\ge 0} a_k \sum_{n\ge k} \binom{n}{k} r^{\,n-k}\frac{t^n}{n!}.
\]
Writing \(n=k+m\), we have \(\binom{k+m}{k}\frac{t^{k+m}}{(k+m)!}=\frac{t^k}{k!}\cdot \frac{t^m}{m!}\), hence
\[
\widehat{A}_r(t)
= \sum_{k\ge 0} a_k \frac{t^k}{k!}\sum_{m\ge 0} \frac{(rt)^m}{m!}
= e^{rt}\,\widehat{A}(t),
\]
which is \cref{eq:egf}.

Finally, the Riordan statement is just a restatement of \cref{eq:ogf}: by definition, a Riordan array \((g,f)\) acts on an ordinary generating function \(A\) by \(A(z)\mapsto g(z)\,A\left(f(z)\right)\).
This viewpoint is standard in the Riordan-array literature; see for instance \cite{ShapiroGetuWoanWoodson,SprugnoliDM1994}.
\end{proof}

\begin{remark}
The identities in \cref{prop:gfs} are formal (i.e. they hold in \(\mathbb{C}[[z]]\) and \(\mathbb{C}[[t]]\)). If the ordinary generating function \(A(z)\) is analytic in a disc \(\left|z\right|<R\), then \cref{eq:ogf} also holds as an analytic identity for every \(z\) with \(\left|rz\right|<1\) and \(\left|\frac{z}{1-rz}\right|<R\). Similarly, \cref{eq:egf} holds wherever \(\widehat{A}(t)\) is defined.
\end{remark}

At this point, the operators \(T_r\) are completely transparent in terms of generating functions.
We now turn to one of their most useful consequences: the effect on linear recurrences with constant coefficients.

\section{Constant-coefficient recurrences and root shifts}

This section explains why \(T_r\) is the natural language for iterated binomial transforms of Fibonacci-type sequences.
The key phenomenon is that \(T_r\) acts on constant-coefficient recurrences by a literal translation of characteristic roots.

Let \(S\) denote the forward shift operator on sequences, \((S\seq{a})_n:=a_{n+1}\).
If \(P(X)=\sum_{j=0}^{d} c_j X^{j}\), we write \(P(S):=\sum_{j=0}^{d} c_j S^{j}\); thus \(\left(P(S)\seq{a}\right)_n=\sum_{j=0}^{d} c_j a_{n+j}\).
A sequence \(\seq{a}\) satisfies a constant-coefficient recurrence with characteristic polynomial \(P\) if and only if
\begin{equation}\label{eq:recurrence-P}
P(S)\seq{a}=0.
\end{equation}

The structural reason behind the root-shift phenomenon is a one-line intertwining identity between \(S\) and \(T_r\).

\begin{lemma}\label{lem:intertwining}
For every \(r\in\mathbb{C}\), one has the operator identity (where operator products denote composition)
\[
(S-r)\,T_r = T_r\,S.
\]
\end{lemma}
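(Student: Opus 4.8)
The plan is to prove the operator identity $(S-r)\,T_r = T_r\,S$ by evaluating both sides on an arbitrary sequence $\seq{a}$ at an arbitrary index $n$, and checking that the resulting finite sums agree. First I would write down the right-hand side: since $(S\seq{a})_k = a_{k+1}$, applying \cref{eq:Tr-def} gives
\[
(T_r S\seq{a})_n = \sum_{k=0}^{n} \binom{n}{k} r^{\,n-k} a_{k+1}.
\]
For the left-hand side, I would first compute $(T_r\seq{a})_n$ and $(T_r\seq{a})_{n+1}$ from the definition, then take the combination $(S-r)T_r\seq{a}$, whose $n$-th entry is $(T_r\seq{a})_{n+1} - r\,(T_r\seq{a})_n$.

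The key computation is to simplify this difference. I would expand
\[
(T_r\seq{a})_{n+1} = \sum_{k=0}^{n+1} \binom{n+1}{k} r^{\,n+1-k} a_k,
\qquad
r\,(T_r\seq{a})_n = \sum_{k=0}^{n} \binom{n}{k} r^{\,n+1-k} a_k,
\]
and subtract termwise. The natural tool is Pascal's rule in the form $\binom{n+1}{k} = \binom{n}{k} + \binom{n}{k-1}$, which makes the $\binom{n}{k}$ contributions cancel against the second sum (after noting the $k=n+1$ term of the first sum has $\binom{n+1}{n+1}=1$ and pairs correctly). What survives is $\sum_{k} \binom{n}{k-1} r^{\,n+1-k} a_k$, and reindexing $k \mapsto k+1$ turns this into exactly the right-hand side above.

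The step I expect to be the most delicate is bookkeeping at the summation endpoints: one must handle the $k=0$ term (where $\binom{n}{k-1}=\binom{n}{-1}=0$ by our convention) and the $k=n+1$ term carefully so that the cancellation via Pascal's rule is clean and no boundary contribution is lost. This is purely a matter of respecting the convention $\binom{n}{k}=0$ for $k\notin\{0,\dots,n\}$ fixed in the introduction, rather than a conceptual obstacle. As an alternative verification, I would note that the identity can also be read off instantly from the exponential generating function picture of \cref{prop:gfs}: multiplication by $e^{rt}$ intertwines $\frac{d}{dt}$ (which encodes $S$ on the EGF side) and $\frac{d}{dt}-r$, giving the same relation without index juggling, though the direct sum manipulation is the cleaner self-contained argument to record here.
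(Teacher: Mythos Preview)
Your proposal is correct and follows essentially the same route as the paper: expand $(T_r\seq{a})_{n+1}-r(T_r\seq{a})_n$, apply Pascal's rule $\binom{n+1}{k}=\binom{n}{k}+\binom{n}{k-1}$ to cancel the $\binom{n}{k}$ terms, and reindex the surviving sum to recognize $(T_r S\seq{a})_n$. The EGF remark you add at the end (intertwining $d/dt$ and $d/dt-r$ via multiplication by $e^{rt}$) is a legitimate and clean alternative that the paper does not record, but your primary argument matches the paper's.
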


\begin{proof}
Fix \(n\ge 0\). Using \cref{eq:Tr-def},
\[
\left((S-r)T_r\seq{a}\right)_n
= (T_r\seq{a})_{n+1} - r(T_r\seq{a})_n
= \sum_{k=0}^{n+1}\binom{n+1}{k} r^{\,n+1-k} a_k - r\sum_{k=0}^{n}\binom{n}{k} r^{\,n-k} a_k.
\]
Split the first sum into \(k=n+1\) and \(k\le n\), and use \(\binom{n+1}{k}=\binom{n}{k}+\binom{n}{k-1}\).
The terms involving \(\binom{n}{k}\) cancel, leaving
\[
\left((S-r)T_r\seq{a}\right)_n
= a_{n+1} + \sum_{k=1}^{n}\binom{n}{k-1} r^{\,n+1-k} a_k
= \sum_{j=0}^{n}\binom{n}{j} r^{\,n-j} a_{j+1}
= (T_r(S\seq{a}))_n.
\]
This proves \((S-r)T_r = T_r S\).
\end{proof}

We can now package the effect of \(T_r\) on recurrences into a single statement.

\begin{theorem}\label{thm:root-shift}
Let \(P\in\mathbb{C}[X]\) be a nonzero polynomial and let \(\seq{a}\) be a sequence satisfying \(P(S)\seq{a}=0\).
Then for every \(r\in\mathbb{C}\), the transformed sequence \(\seq{b}:=T_r\seq{a}\) satisfies
\[
P(S-r)\,\seq{b}=0.
\]
Equivalently, if we set \(Q(X):=P(X-r)\), then \(Q(S)\seq{b}=0\); in the sense of \cref{eq:recurrence-P}, the polynomial \(Q\) is a characteristic polynomial for \(\seq{b}\).

In particular, if \(P\) is monic of degree \(d\) with roots \(\rho_1,\dots,\rho_d\) (counted with multiplicity), then \(\seq{b}\) satisfies a recurrence of order at most \(d\) whose characteristic roots are \(\rho_1+r,\dots,\rho_d+r\), with the same multiplicities.
\end{theorem}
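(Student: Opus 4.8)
The plan is to leverage \cref{lem:intertwining} together with the invertibility of $T_r$ supplied by \cref{prop:semigroup}, reducing the entire statement to the fact that conjugation by $T_r$ is an algebra homomorphism sending $S$ to $S-r$. First I would recast the intertwining identity $(S-r)T_r = T_r S$ in conjugated form: since $T_r$ is invertible with $T_r^{-1}=T_{-r}$, right-multiplying by $T_{-r}$ gives
\[
S-r = T_r\,S\,T_{-r}.
\]
This reformulation is the crux, because the map $X\mapsto T_r X T_{-r}$ on operators respects both sums and composition and fixes scalar multiples of the identity.

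From here the argument is purely formal. For any polynomial $P(X)=\sum_j c_j X^j$, applying the conjugation termwise yields
\[
P(S-r) = \sum_j c_j\,(T_r S T_{-r})^j = T_r\Bigl(\sum_j c_j S^j\Bigr)T_{-r} = T_r\,P(S)\,T_{-r},
\]
where the middle equality uses $(T_r S T_{-r})^j = T_r S^j T_{-r}$ — the interior factors $T_{-r}T_r$ telescope to the identity — together with the fact that $T_r$ commutes with the scalars $c_j$. I would then evaluate both sides at $\seq{b}:=T_r\seq{a}$; using $T_{-r}T_r=T_0=\mathrm{id}$ and the hypothesis $P(S)\seq{a}=0$,
\[
P(S-r)\seq{b} = T_r\,P(S)\,T_{-r}T_r\,\seq{a} = T_r\bigl(P(S)\seq{a}\bigr) = 0.
\]
Writing $Q(X):=P(X-r)$ turns this into $Q(S)\seq{b}=0$, which is exactly the first assertion.

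For the closing ``in particular'', I would simply factor. If $P$ is monic of degree $d$ with roots $\rho_1,\dots,\rho_d$, then $P(X)=\prod_{i=1}^d(X-\rho_i)$, so
\[
Q(X)=P(X-r)=\prod_{i=1}^d\bigl((X-r)-\rho_i\bigr)=\prod_{i=1}^d\bigl(X-(\rho_i+r)\bigr).
\]
Hence $Q$ is again monic of degree $d$, its roots are the shifts $\rho_i+r$ with unchanged multiplicities, and the relation $Q(S)\seq{b}=0$ exhibits the advertised recurrence of order at most $d$.

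I do not expect a genuine obstacle here: the computation is short once \cref{lem:intertwining} is available. The only points demanding care are bookkeeping the direction of the shift — one must right-multiply the intertwining identity by $T_{-r}$ (rather than left-multiply) to land on $S-r$ and not $S+r$ — and checking that conjugation commutes with polynomial evaluation, which is immediate from invertibility and associativity of operator composition.
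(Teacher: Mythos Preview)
Your proof is correct and follows essentially the same route as the paper: both start from \cref{lem:intertwining} and promote the single intertwining identity to the operator identity $P(S-r)\,T_r = T_r\,P(S)$, then apply it to $\seq{a}$. The only cosmetic difference is that the paper iterates $(S-r)T_r=T_rS$ directly to $(S-r)^mT_r=T_rS^m$ and invokes linearity, whereas you pass through the conjugated form $S-r=T_rST_{-r}$ using the invertibility from \cref{prop:semigroup}; the paper's version is marginally leaner in that it never needs $T_r$ to be invertible.
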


\begin{proof}
By \cref{lem:intertwining}, \((S-r)T_r = T_r S\).
Iterating this identity gives \((S-r)^m T_r = T_r S^m\) for every integer \(m\ge 0\).
By linearity, for any polynomial \(P\) we therefore have the operator identity
\[
P(S-r)\,T_r = T_r\,P(S).
\]
Applying this to \(\seq{a}\) and using \(P(S)\seq{a}=0\) yields \(P(S-r)\seq{b}=0\).
\end{proof}

Related root-translation statements in a more general two-parameter setting appear in \cite{BarberoCerrutiMurru2010,AbrateBarberoCerrutiMurru2011}. The present note focuses on the one-parameter semigroup \(T_r\) and gives a self-contained proof adapted to this minimal framework. At this point one can read off concrete recurrences and closed forms almost for free. We record two standard ``translations'' of \cref{thm:root-shift} that are often convenient in applications.

To make the shift in characteristic polynomials completely explicit, write a monic polynomial of degree \(d\) as
\[
P(X)=\sum_{k=0}^{d} p_k X^{d-k}
\qquad (p_0=1),
\]
and set \(Q(X):=P(X-r)=\sum_{j=0}^{d} q_j X^{d-j}\).
Expanding \(P(X-r)\) by the binomial theorem shows that, for each \(0\le j\le d\),
\begin{equation}\label{eq:qj-formula}
q_j = \sum_{k=0}^{j} p_k \binom{d-k}{j-k} (-r)^{\,j-k}.
\end{equation}
Thus, if \(\seq{a}\) satisfies the recurrence associated with \(P\), then \(\seq{b}:=T_r\seq{a}\) satisfies the recurrence associated with \(Q\).

When the characteristic roots are simple, \cref{thm:root-shift} also translates directly into a ``Binet shift''.
Indeed, if \(\seq{a}\) admits a representation \(a_n=\sum_{j=1}^{d} c_j \rho_j^{\,n}\) with \(\rho_1,\dots,\rho_d\) pairwise distinct, then applying \cref{eq:Tr-def} term-by-term and using the binomial theorem gives
\begin{equation}\label{eq:binet-shift}
(T_r\seq{a})_n = \sum_{j=1}^{d} c_j \left(\rho_j+r\right)^{n}.
\end{equation}
This identity is frequently the quickest route to clean special cases, such as the \(r=1\) identities collected later in this paper.

\section{Second-order families at a glance}

Most classical ``Fibonacci-type'' sequences are governed by a second-order recurrence, so it is useful to isolate the universal second-order template.
Let \(\seq{a}\) satisfy a recurrence with characteristic polynomial
\begin{equation}\label{eq:second-order-poly}
P(X)=X^2-pX+q
\qquad (p,q\in\mathbb{C}).
\end{equation}
Equivalently, \(a_n=p\,a_{n-1}-q\,a_{n-2}\) for \(n\ge 2\).
By \cref{thm:root-shift}, \(\seq{b}:=T_r\seq{a}\) has characteristic polynomial \(P(X-r)\), namely
\[
P(X-r)=X^2-(p+2r)X+\left(r^2+pr+q\right),
\]
so \(\seq{b}\) satisfies
\begin{equation}\label{eq:second-order-template}
b_n = (p+2r)\,b_{n-1} - \left(r^2+pr+q\right)b_{n-2}
\qquad (n\ge 2).
\end{equation}

To illustrate how little work is required once the template is in place, consider the Fibonacci sequence $(F_n)_{n \geqslant 0}$.
Here \(P(X)=X^2-X-1\), so \(p=1\) and \(q=-1\), and \cref{eq:second-order-template} gives
\[
(T_r F)_n = (2r+1)(T_r F)_{n-1} - (r^2+r-1)(T_r F)_{n-2}
\qquad (n\ge 2),
\]
with \((T_r F)_0=F_0=0\) and \((T_r F)_1=F_1=1\).
The same computation applies verbatim to the Lucas sequence, since it has the same characteristic polynomial but different initial values.

Rather than repeating essentially identical derivations for each classical family, we summarize the data in the next section.
For quick sanity checks and for OEIS searches, a second table records short prefixes at \(r=1\) and \(r=2\).

\medskip

A simple polynomial variant that appears naturally in the context of generalized Mersenne polynomials is also immediate from \cref{thm:root-shift}.
Let \((W_n(x))_{n\ge 0}\) be defined by \(W_0(x)=0\), \(W_1(x)=1\), and
\[
W_n(x)=3x\,W_{n-1}(x)-2\,W_{n-2}(x)\qquad (n\ge 2).
\]
Its characteristic polynomial is \(P_x(X)=X^2-3xX+2\).
Therefore, setting \(\seq{W}^{(r)}(x):=T_r\seq{W}(x)\), the transformed family satisfies
\[
W^{(r)}_n(x)=\left(2r+3x\right)W^{(r)}_{n-1}(x)-\left(r^2+3xr+2\right)W^{(r)}_{n-2}(x)
\qquad (n\ge 2),
\]
with \(W^{(r)}_0(x)=0\) and \(W^{(r)}_1(x)=1\).
When \(r\in\mathbb{N}\), this is precisely the recurrence satisfied by the \(r\)-fold iterated binomial transform of the generalized Mersenne family; see \cite[Thm.~5]{Yilmaz2025} (with the present normalization \(W_0(x)=0\), \(W_1(x)=1\)).

\section{A corollary table and OEIS-friendly data}

For quick reuse, we collect in \cref{tab:classical} the characteristic polynomials and transformed second-order recurrences for several standard sequences.
The OEIS identifiers in the second column are included purely as entry points: they make it easier to locate the base objects and related transforms, and they tend to improve discoverability in practice.

\begin{table}[!ht]
\centering
\small
\setlength{\tabcolsep}{4pt}
\renewcommand{\arraystretch}{1.15}
\begin{tabularx}{\textwidth}{@{}l l l >{\raggedright\arraybackslash}X l@{}}
\toprule
Sequence $(a_n)$ & OEIS & $P(X)$ & Recurrence for $\seq{b}:=T_r\seq{a}$ ($n\ge 2$) & $(b_0,b_1)$\\
\midrule
Fibonacci $(F_n)$ & A000045 & $X^2-X-1$ &
$\,b_n=(2r+1)b_{n-1}\allowbreak -(r^2+r-1)b_{n-2}$ & $(0,1)$\\
Lucas $(L_n)$ & A000032 & $X^2-X-1$ &
$\,b_n=(2r+1)b_{n-1}\allowbreak -(r^2+r-1)b_{n-2}$ & $(2,2r+1)$\\
Pell $(P_n)$ & A000129 & $X^2-2X-1$ &
$\,b_n=(2r+2)b_{n-1}\allowbreak -(r^2+2r-1)b_{n-2}$ & $(0,1)$\\
Jacobsthal $(J_n)$ & A001045 & $X^2-X-2$ &
$\,b_n=(2r+1)b_{n-1}\allowbreak -(r^2+r-2)b_{n-2}$ & $(0,1)$\\
Mersenne $(m_n=2^n-1)$ & A000225 & $X^2-3X+2$ &
$\,b_n=(2r+3)b_{n-1}\allowbreak -(r^2+3r+2)b_{n-2}$ & $(0,1)$\\
\bottomrule
\end{tabularx}
\caption{A compact summary of transformed recurrences for selected classical sequences. Each line is an immediate instance of \cref{thm:root-shift}.}
\label{tab:classical}
\end{table}

Several specializations at small parameters are unexpectedly clean.
The point is that \cref{thm:root-shift} reduces such identities to elementary algebraic relations between characteristic roots.

\begin{proposition}\label{prop:r1-special}
Let $B:=T_1$ be the classical binomial transform.
Then, for all $n\ge 0$,
\[
(BF)_n = F_{2n},
\qquad
(BL)_n = L_{2n},
\qquad
(Bm)_n = 3^n-2^n,
\]
and for the Jacobsthal sequence one has $(BJ)_0=0$ and $(BJ)_n=3^{n-1}$ for every $n\ge 1$.
\end{proposition}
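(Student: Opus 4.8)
The plan is to obtain all four identities as immediate specializations of the Binet shift \cref{eq:binet-shift} at $r=1$, reducing each case to an elementary algebraic relation among the characteristic roots. In every instance the relevant characteristic polynomial has simple roots, so \cref{eq:binet-shift} applies directly and the work amounts to evaluating the shifted powers $(\rho_j+1)^n$.

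First I would treat Fibonacci and Lucas together, since both are governed by $X^2-X-1$ with distinct roots $\phi=\tfrac{1+\sqrt{5}}{2}$ and $\psi=\tfrac{1-\sqrt{5}}{2}$. Starting from the Binet representations $F_n=(\phi^n-\psi^n)/\sqrt{5}$ and $L_n=\phi^n+\psi^n$, \cref{eq:binet-shift} gives $(BF)_n=\bigl((\phi+1)^n-(\psi+1)^n\bigr)/\sqrt{5}$ and $(BL)_n=(\phi+1)^n+(\psi+1)^n$. The key point is that each root satisfies its own characteristic equation $\rho^2=\rho+1$, so adding the shift $1$ squares the root: $\phi+1=\phi^2$ and $\psi+1=\psi^2$. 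Substituting and recognizing the Binet formulas at index $2n$ yields $(BF)_n=F_{2n}$ and $(BL)_n=L_{2n}$.

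Next I would handle Mersenne and Jacobsthal, whose characteristic polynomials factor over $\mathbb{Z}$. For Mersenne, $X^2-3X+2=(X-1)(X-2)$ gives $m_n=2^n-1=1\cdot 2^n+(-1)\cdot 1^n$, and shifting the roots $2,1$ by $1$ produces $(Bm)_n=3^n-2^n$. For Jacobsthal, $X^2-X-2=(X-2)(X+1)$ gives $J_n=\tfrac{1}{3}\bigl(2^n-(-1)^n\bigr)$, and the shift sends the roots $2,-1$ to $3,0$, so \cref{eq:binet-shift} yields $(BJ)_n=\tfrac{1}{3}\bigl(3^n-0^n\bigr)$.

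The only delicate point is the $n=0$ term in the Jacobsthal case, where the factor $0^n$ coming from the shifted root $0$ is ambiguous. I would resolve this not by fixing a convention for $0^0$ but by observing directly from \cref{eq:Tr-def} that $(T_r\seq{a})_0=a_0$ for every sequence, whence $(BJ)_0=J_0=0$; for $n\ge 1$ the term $0^n$ vanishes and $(BJ)_n=3^n/3=3^{n-1}$. Beyond this bookkeeping there is no real obstacle: the content of the proposition is precisely that, at $r=1$, the abstract root translation of \cref{thm:root-shift} collapses to concrete arithmetic — most strikingly for the golden-ratio roots, where the map $\rho\mapsto\rho+1$ coincides with $\rho\mapsto\rho^2$.
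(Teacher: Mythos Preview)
Your proof is correct and follows essentially the same route as the paper: both arguments specialize the Binet shift \cref{eq:binet-shift} at $r=1$ and use the relation $\rho+1=\rho^2$ for the roots of $X^2-X-1$, together with the explicit shifts $1,2\mapsto 2,3$ and $-1,2\mapsto 0,3$ in the Mersenne and Jacobsthal cases. The only cosmetic difference is that you carry the explicit Binet coefficients through \cref{eq:binet-shift}, whereas the paper sometimes determines them a posteriori from initial values; your handling of $(BJ)_0$ via $(T_r\seq{a})_0=a_0$ is in fact slightly more careful than the paper's.
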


\begin{proof}
For Fibonacci, the characteristic roots are $\varphi,\psi$ with $\varphi+\psi=1$ and $\varphi\psi=-1$.
By \cref{eq:binet-shift}, the transform $BF=T_1F$ has characteristic roots $\varphi+1$ and $\psi+1$.
Since $\varphi^2=\varphi+1$ and $\psi^2=\psi+1$, the roots of $BF$ are $\varphi^2,\psi^2$, which are also the characteristic roots of the subsequence $(F_{2n})_{n\ge 0}$.
Both sequences start with $(0,1)$, hence $BF= (F_{2n})_{n\ge 0}$.
The Lucas identity is identical, since $L_n=\varphi^n+\psi^n$.

For Mersenne, $m_n=2^n-1$ has characteristic roots $1$ and $2$, hence $Bm$ has characteristic roots $2$ and $3$.
The condition $(Bm)_0=0$ forces a representation of the form $(Bm)_n=c(3^n-2^n)$, and $(Bm)_1=1$ gives $c=1$, hence $(Bm)_n=3^n-2^n$.

For Jacobsthal, the characteristic roots are $2$ and $-1$, so $BJ$ has roots $3$ and $0$.
Thus $(BJ)_n=c\cdot 3^n$ for every $n\ge 1$, and the initial value $(BJ)_1=1$ yields $c=1/3$, i.e.\ $(BJ)_n=3^{n-1}$ for $n\ge 1$.
\end{proof}

Finally, we record initial segments for a few small values of $r$.
These are convenient for quick checks, and they also make it straightforward to locate related transforms in the OEIS by searching short prefixes.

\begin{table}[!ht]
\centering
\small
\setlength{\tabcolsep}{3pt}
\renewcommand{\arraystretch}{1.1}
\begin{tabularx}{\textwidth}{@{}l >{\raggedright\arraybackslash}X >{\raggedright\arraybackslash}X@{}}
\toprule
Base sequence $(a_n)$ & $\left(T_1\seq{a}\right)_n$ for $n=0,\dots,9$ & $\left(T_2\seq{a}\right)_n$ for $n=0,\dots,9$\\
\midrule
Fibonacci $(F_n)$ & $0,\allowbreak 1,\allowbreak 3,\allowbreak 8,\allowbreak 21,\allowbreak 55,\allowbreak 144,\allowbreak 377,\allowbreak 987,\allowbreak 2584$ & $0,\allowbreak 1,\allowbreak 5,\allowbreak 20,\allowbreak 75,\allowbreak 275,\allowbreak 1000,\allowbreak 3625,\allowbreak 13125,\allowbreak 47500$\\
Lucas $(L_n)$ & $2,\allowbreak 3,\allowbreak 7,\allowbreak 18,\allowbreak 47,\allowbreak 123,\allowbreak 322,\allowbreak 843,\allowbreak 2207,\allowbreak 5778$ & $2,\allowbreak 5,\allowbreak 15,\allowbreak 50,\allowbreak 175,\allowbreak 625,\allowbreak 2250,\allowbreak 8125,\allowbreak 29375,\allowbreak 106250$\\
Pell $(P_n)$ & $0,\allowbreak 1,\allowbreak 4,\allowbreak 14,\allowbreak 48,\allowbreak 164,\allowbreak 560,\allowbreak 1912,\allowbreak 6528,\allowbreak 22288$ & $0,\allowbreak 1,\allowbreak 6,\allowbreak 29,\allowbreak 132,\allowbreak 589,\allowbreak 2610,\allowbreak 11537,\allowbreak 50952,\allowbreak 224953$\\
Jacobsthal $(J_n)$ & $0,\allowbreak 1,\allowbreak 3,\allowbreak 9,\allowbreak 27,\allowbreak 81,\allowbreak 243,\allowbreak 729,\allowbreak 2187,\allowbreak 6561$ & $0,\allowbreak 1,\allowbreak 5,\allowbreak 21,\allowbreak 85,\allowbreak 341,\allowbreak 1365,\allowbreak 5461,\allowbreak 21845,\allowbreak 87381$\\
Mersenne $(m_n)$ & $0,\allowbreak 1,\allowbreak 5,\allowbreak 19,\allowbreak 65,\allowbreak 211,\allowbreak 665,\allowbreak 2059,\allowbreak 6305,\allowbreak 19171$ & $0,\allowbreak 1,\allowbreak 7,\allowbreak 37,\allowbreak 175,\allowbreak 781,\allowbreak 3367,\allowbreak 14197,\allowbreak 58975,\allowbreak 242461$\\
\bottomrule
\end{tabularx}
\caption{Initial segments of transforms at $r=1$ and $r=2$ for several classical sequences.}
\label{tab:initial-segments}
\end{table}

\section{Two concrete interpretations}

It is fair to say that \cref{thm:root-shift} already answers the most common ``iterated binomial transform'' question: once a sequence is known to satisfy a constant-coefficient recurrence, one can write down the transformed recurrence by the formal substitution \(X\mapsto X-r\).
However, the operators \(T_r\) are not merely a convenient notation for this substitution.
They encode two genuinely structural viewpoints that are often more useful than the recurrence itself.

The first one is combinatorial and becomes especially transparent when \(r\) is a nonnegative integer.
At the level of exponential generating functions, \cref{eq:egf} says that \(T_r\) simply multiplies by \(e^{rt}\), which is the exponential generating function of a labeled set of ``free'' atoms with \(r\) colors.

\begin{proposition}\label{prop:comb-interpretation}
Let \(r\in\mathbb{Z}_{\ge 0}\).
Assume that \(\seq{a}=(a_n)_{n\ge 0}\) is the counting sequence of a labeled class \(\mathcal{A}\), so that \(\widehat{A}(t)=\sum_{n\ge 0} a_n t^n/n!\) is its exponential generating function.
Then \(\seq{b}:=T_r\seq{a}\) counts the class of pairs consisting of an \(\mathcal{A}\)-structure on a subset of labels together with an \(r\)-coloring of the complementary labels.
Equivalently, for each \(n\), one has
\[
b_n=\sum_{k=0}^{n}\binom{n}{k} r^{\,n-k} a_k,
\]
where \(\binom{n}{k}\) chooses the subset carrying the \(\mathcal{A}\)-structure and \(r^{n-k}\) colors the remaining labels.
\end{proposition}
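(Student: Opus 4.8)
The plan is to realize the sum defining $T_r$ as a labeled (exponential) product of the class $\mathcal{A}$ with an elementary ``coloring'' class, so that the statement reduces to the product rule for exponential generating functions together with \eqref{eq:egf}.

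First I would introduce an auxiliary labeled class $\mathcal{C}_r$ whose structures on a finite label set are the assignments of one of $r$ colors to each label. On a set of $m$ labels there are exactly $r^m$ such structures, so the exponential generating function of $\mathcal{C}_r$ is $\sum_{m\ge 0} r^m t^m/m! = e^{rt}$, which is precisely the factor appearing in \eqref{eq:egf}.

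Next I would form the labeled product $\mathcal{B}:=\mathcal{A}\star\mathcal{C}_r$, whose structures on $[n]$ are the triples given by a splitting $[n]=I\sqcup J$, an $\mathcal{A}$-structure on $I$, and an $r$-coloring of $J$. Enumerating these by $k:=\abs{I}$ would give
\[
b_n=\sum_{k=0}^{n}\binom{n}{k}\,a_k\,r^{\,n-k},
\]
since $\binom{n}{k}$ selects $I$, the factor $a_k$ counts the $\mathcal{A}$-structures on it, and $r^{\,n-k}$ counts the colorings of the complement $J$. Comparing with \eqref{eq:Tr-def} identifies this count with $(T_r\seq{a})_n$, so $\seq{b}=T_r\seq{a}$ is indeed the counting sequence of $\mathcal{B}$, which is exactly the class described in the statement. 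For coherence I would also remark that the product rule for labeled classes yields $\widehat{B}(t)=\widehat{A}(t)\,e^{rt}$, in exact agreement with \eqref{eq:egf}.

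The only delicate point is the label bookkeeping in the labeled product: one must use the convention that the two components carry disjoint label sets whose union is the full set $[n]$ (equivalently, that composite structures are formed after distributing the labels among the components), since this is precisely what produces the binomial coefficient and, at the level of generating functions, the multiplicative $e^{rt}$ factor. As this is the standard setup of the symbolic method, I expect no genuine obstacle beyond fixing the framework carefully; everything else is the routine dictionary between the labeled product and multiplication of exponential generating functions.
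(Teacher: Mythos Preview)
Your argument is correct and is essentially the same as the paper's: both identify $e^{rt}$ as the exponential generating function of $r$-colored labeled atoms and then read the labeled product $\mathcal{A}\star\mathcal{C}_r$ (equivalently, \cref{eq:egf}) as the pair ``$\mathcal{A}$-structure on a subset, $r$-coloring on the complement''. Your version simply makes the labeled-product bookkeeping more explicit before matching the resulting sum with \cref{eq:Tr-def}.
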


\begin{proof}
By \cref{eq:egf}, the exponential generating function of \(\seq{b}=T_r\seq{a}\) is
\(\widehat{A}_r(t)=e^{rt}\widehat{A}(t)\).
When \(r\in\mathbb{Z}_{\ge 0}\), the factor \(e^{rt}\) is the exponential generating function of a labeled set of atoms, each carrying one of \(r\) colors.
On an \(n\)-element label set, choose \(k\) labels to support an \(\mathcal{A}\)-structure (\(\binom{n}{k}\) choices), and color the remaining \(n-k\) labels in \(r^{n-k}\) ways.
Summing over \(k\) yields the displayed formula for \(b_n\), which is equivalent to the stated combinatorial description.
\end{proof}

This interpretation immediately explains why the semigroup law in \cref{prop:semigroup} is additive: ``adding \(r\) free colors'' and then ``adding \(s\) free colors'' is the same as adding \(r+s\) free colors at once.
It also suggests a robust way to generate new combinatorial models for familiar sequences by inserting a colored ``noise'' set.

\bigskip

The second viewpoint is linear-algebraic.
Many sequences of interest arise as matrix coefficients \(a_n=u^{\mathsf{T}}M^{n}v\), for instance from companion matrices of linear recurrences or from adjacency matrices of directed graphs.
In this setting, the transform \(T_r\) becomes a literal spectral shift \(M\mapsto M+rI\).

\begin{proposition}\label{prop:matrix-shift}
Let \(M\) be a square matrix over a commutative \(\mathbb{C}\)-algebra, and let \(u,v\) be compatible column vectors.
Define \(a_n:=u^{\mathsf{T}}M^{n}v\).
Then for every \(r\in\mathbb{C}\),
\[
(T_r\seq{a})_n = u^{\mathsf{T}}\left(M+rI\right)^{n}v.
\]
\end{proposition}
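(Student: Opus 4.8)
The plan is to unwind the definition of \(T_r\) directly and then absorb the scalar weights \(r^{\,n-k}\) into a matrix binomial expansion. Starting from \cref{eq:Tr-def} together with the hypothesis \(a_n=u^{\mathsf{T}}M^{n}v\), I would write
\[
(T_r\seq{a})_n = \sum_{k=0}^{n} \binom{n}{k} r^{\,n-k}\, u^{\mathsf{T}} M^{k} v.
\]
Since the row vector \(u^{\mathsf{T}}\) and the column vector \(v\) do not depend on the summation index \(k\), linearity of matrix multiplication lets me pull them outside the sum, leaving
\[
(T_r\seq{a})_n = u^{\mathsf{T}} \left( \sum_{k=0}^{n} \binom{n}{k} r^{\,n-k} M^{k} \right) v.
\]

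The core of the argument is to recognize the bracketed matrix as \((M+rI)^{n}\). I would observe that \(rI\) is a scalar multiple of the identity and therefore commutes with \(M\); for commuting summands the ordinary binomial theorem holds at the level of matrices, so
\[
(M+rI)^{n} = \sum_{k=0}^{n} \binom{n}{k} M^{k} (rI)^{\,n-k} = \sum_{k=0}^{n} \binom{n}{k} r^{\,n-k} M^{k},
\]
which is exactly the bracketed expression. Substituting back then yields \((T_r\seq{a})_n = u^{\mathsf{T}}(M+rI)^{n}v\), as claimed.

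The only point requiring care — and it is the crux rather than a genuine obstacle — is the appeal to the matrix binomial theorem, which fails in general for noncommuting summands. Here it is available precisely because one of the two terms, \(rI\), lies in the center of the matrix algebra, so no commutator corrections arise. I would note that the computation goes through verbatim over an arbitrary commutative \(\mathbb{C}\)-algebra of scalars, since \(rI\) remains central regardless of the ground ring. As a conceptual sanity check, this proposition is also fully consistent with \cref{thm:root-shift}: when \(M\) is a companion matrix of a recurrence, passing from \(M\) to \(M+rI\) shifts every eigenvalue by \(r\), matching the root translation \(X\mapsto X-r\) on characteristic polynomials.
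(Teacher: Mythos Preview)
Your proof is correct and follows essentially the same approach as the paper: both rely on the matrix binomial theorem, valid because \(rI\) commutes with \(M\), to identify \(\sum_{k=0}^{n}\binom{n}{k} r^{\,n-k} M^{k}\) with \((M+rI)^{n}\), and then contract with \(u^{\mathsf{T}}\) and \(v\). The paper's version is simply terser, proceeding from the binomial expansion to the conclusion in one line, whereas you unwind the definition of \(T_r\) first; the logical content is identical.
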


\begin{proof}
Since \(rI\) commutes with \(M\), the binomial theorem gives \(\left(M+rI\right)^{n}=\sum_{k=0}^{n}\binom{n}{k} r^{\,n-k} M^{k}\).
Multiplying by \(u^{\mathsf{T}}\) on the left and by \(v\) on the right yields the claim.
\end{proof}

In particular, when \(M\) is a companion matrix for a constant-coefficient recurrence, \cref{prop:matrix-shift} is another way to see the translation of characteristic roots in \cref{thm:root-shift}.
For graph-theoretic sequences, the same identity may be read as adding a loop of weight \(r\) at each vertex, turning each walk into a walk with ``lazy'' steps; for integer \(r\), these lazy steps can be interpreted as \(r\)-colored choices.

\section{Further remarks and directions}

The philosophy of this note is that many computations surrounding iterated binomial transforms are instances of a basic operator identity, namely \(T_r\circ T_s=T_{r+s}\).
Once this is in place, the cost of extracting concrete recurrences or closed forms becomes essentially the cost of a single substitution \(X\mapsto X-r\) in a characteristic polynomial. There are several natural extensions that can be pursued in the same ``structural first'' style.

At the level of exponential generating functions, \cref{eq:egf} states that \(T_r\) acts by multiplication by \(e^{rt}\).
Thus \(T_r\) is the simplest nontrivial element of the Sheffer group, and the semigroup law is the additive law of the exponent; see \cite{Roman} for background on the umbral/Sheffer calculus.
This viewpoint is efficient when studying the effect of \(T_r\) on polynomial sequences in \(n\) and its connections to Stirling and Bell numbers.

The same package of ideas applies to several closely related transforms, such as binomial transforms with alternating signs, and more generally to two-parameter binomial operators whose composition law explains iteration; see \cite{BarberoCerrutiMurru2010,AbrateBarberoCerrutiMurru2011}.
In each case, there is typically an underlying Riordan (or Sheffer) group element whose composition law explains iteration.

When binomial-type transforms are studied primarily as tools, it is usually best to isolate the operator identity and its composition law first, and to postpone sequence-specific computations to a compact corollary table.
In particular, \cref{tab:classical,tab:initial-segments} are intended as quick ``drop-in'' references: they allow one to recover explicit recurrences and to locate related objects in the OEIS without rereading the proofs.

\enlargethispage{\baselineskip}

\end{document}